\newtheorem{Theorem}{Theorem}[section]
\newtheorem{Lemma}[Theorem]{Lemma}
\newtheorem{Corollary}[Theorem]{Corollary}
\theoremstyle{definition}
\theoremstyle{remark}
\numberwithin{equation}{section}
\newcommand{\R}{{\mathbb R}}
\newcommand{\C}{{\mathbb C}}
\newcommand{\N}{{\mathbb N}}
\journal{Journal of Differential Equations}
\begin{document}

\begin{frontmatter}
\title{The $m$-functions of discrete Schr\"odinger operators are sparse compared to those for Jacobi operators}
\author{Injo Hur\fnref{myfootnote}}
\address{Department of Mathematical Sciences\\
Ulsan National Institute of Science and Technology\\
Ulsan, Republic of Korea}

\begin{abstract}
We explore the sparsity of Weyl-Titchmarsh $m$-functions of discrete Schr\"odinger operators. Due to this, the set of their $m$-functions cannot be dense on the set of those for Jacobi operators. All this reveals why an inverse spectral theory for discrete Schr\"odinger operators via their spectral measures should be difficult. To obtain the result, de Branges theory of canonical systems is applied to work on them, instead of Weyl-Titchmarsh $m$-functions.  
\end{abstract}

\begin{keyword}
Discrete Schr\"odinger operator \sep Jacobi operator \sep canonical system \sep Weyl-Titchmarsh $m$-function \sep inverse spectral theory
\MSC[2010] 47B36 \sep 34B20 \sep 34A55
\end{keyword}

\end{frontmatter}

%

\section{Introduction}\label{secintro}
Inverse spectral theories enable us to comprehend operators through their spectral data. Several results have been well-known in one-dimensional setting via so-called spectral measures or Weyl-Titchmarsh $m$-functions: there are one-to-one correspondence between canonical systems and Herglotz functions \cite{deB, Win}, another correspondence between Jacobi operators and probability measures \cite{RemLN,Teschl}, and the representations of the spectral measures for Schr\"odinger operators \cite{GL,GS2,Mar2,RemdB,SimIST}. (See precise definitions in the relevant sections below.) 

Roughly speaking, the first two turn out to be very easy to apply. This is because both spaces of all Herglotz functions (or equivalently, canonical systems due to the 1-1 correspondence above) and all probability measures (or Jacobi operators) whose supports are in some bounded and closed interval (for example, $[-2,2]$) are known to be compact. In other words, these two spaces are so ``affluent"  to not miss any related spectral measures. 

Unlike this, some difficult statements are necessary in order to describe the spectral measures of Schr\"odinger operators \cite{GL,GS2,Mar2,RemdB,SimIST}. These have some Gelfand-Levitan type conditions, which are presented in terms of a Fourier-Laplace type transform of their spectral measures. Since 
not every Herglotz function can occur as an $m$-function of some Schr\"odinger operator, i.e., there are lots of missing pieces on the space of Schr\"odinger operators compared to canonical systems, the compactness cannot be preserved on this space. 

Fortunately, in spite of this insufficiency, this set seems to be still ample in order to formulate an inverse spectral theory. One of the aspects showing this is the density of all their $m$-functions on the (compact) space of the Herglotz functions \cite{Hur3}. Schr\"odinger operators are enough to approximate any canonical systems.

In this paper, we would like to reveal the sparsity of discrete Schr\"odinger operators, which is contrary to the idea   based on the density outcome of Schr\"odinger operators \cite{Hur3}. (In many applications we would expect to have the same conclusion on both continuous and discrete settings.) Due to the scarcity, it will be shown that there is a Jacobi operator whose $m$-function is far from those of discrete Schr\"odinger operators.  This non-density seems to advocate the fact that no inverse spectral theory does appear to be known for discrete Schr\"odinger operators. They are too scanty to be described. See Table 1 below. 

\begin{table}[h]
\caption{Inverse spectral theories in one-dimensional space}
\centering
\begin{tabular}{c c}
\hline\hline 
Continuous &  Discrete \\ [0.5ex]
\hline 
Canonical systems: compact & Jacobi operators: compact \quad \\
$\implies$ Easy theory & \quad $\implies$  Easy theory \\

Schr\"odinger operators: dense &  \textrm{ } Discrete Schr\"odinger operators: scarcity\\
$\implies$ Difficult theory &  $\implies$ ``Hard to get such a theory"\\[0.5ex]
\hline
\end{tabular}
\end{table}
  
To derive the sparsity or non-density, the fundamental idea in \cite{Hur3} is followed: instead of dealing with spectral measures or $m$-functions directly, let's work on the related canonical systems. For this, so-called de Branges theory of canonical systems \cite{Ach,deB,HSW,KL,RemdB,Win3,Win4,Win,Win2} has been applied in \cite{Hur3}. 

Similarly in this paper,  we construct a route for discrete Schr\"odinger operators via canonical systems. More precisely, Theorem \ref{classificationofjcs} and Theorem \ref{classificationofdscs} show the complete expressions of the canonical systems related to Jacobi and discrete Schr\"odinger operators, respectively. Let us call them \emph{Jacobi canonical systems} and \emph{discrete Schr\"odinger canonical systems} for convenience. Since no inverse theory of discrete Schr\"odinger operators has been known, it is Theorem \ref{classificationofdscs} that seems to be the only way to deal with them. 

As an application of Theorem \ref{classificationofdscs}, some Jacobi canonical system will be constructed, such that it cannot be approximated by any sequences of discrete Schr\"odinger canonical systems. By two expressions above, this implies that there is a Jacobi operator whose $m$-function is far from those of discrete Schr\"odinger operators. See Theorem \ref{nodensity} below.

Let us enlighten the usefulness of Theorem \ref{classificationofdscs}. It can be interpreted as an inverse spectral theory for discrete Schr\"odinger operators in the version of (trace-normed) canonical systems. Note  again that there has been no such a  theory for discrete Schr\"odinger operators via their $m$-functions, or equivalently, their spectral measures. This theorem will therefore be very useful to deal with unsolved problems, especially for discrete Schr\"odinger operators, in the field of the spectral theory.

This paper is organized as follows. Basic materials, such as Jacobi operators, canonical systems and their $m$-functions, are provided on the following section. In Section \ref{classification} we classify the canonical systems that are the eigenvalue equations for Jacobi or discrete Schr\"odinger operators in disguise. With this classification, we, in Section \ref{example}, then construct a canonical system corresponding to some Jacobi operator whose $m$-function cannot be approachable by the $m$-functions of discrete Schr\"odinger operators. 

\section{Preliminaries}\label{secbasicm}
\subsection{Jacobi and discrete Schr\"odinger operators}\label{Jacobi}
A Jacobi operator $\mathcal{J}$ is a difference operator on $\ell^2(\N)$, defined by
\begin{equation*}
(\mathcal{J}u)_n=
\begin{cases}
\textrm{ } a_{n-1}u_{n-1}+a_n u_{n+1}+b_n u_n, \quad n\ge2 \\
\textrm{ } a_n u_{n+1}+b_n u_n, \quad n=1
\end{cases}
\end{equation*}
where $a, b\in \ell^{\infty}({\N})$, $a_n<0$, $b_n\in\R$. Many other papers assume that $a_n>0$, but it is well-known that the signs of $a_n$ can be switched such that two Jacobi operators are unitarily equivalent. Discrete Schr\"odinger operators are the Jacobi operators satisfying $a_n= -1$ for all $n$. 

Then a Weyl-Titchmarsh $m$-function corresponding to the given Jacobi operator is defined by
\begin{equation}\label{mfnforjacobi}
m_{\mathcal{J}}(z) = - \frac{\tilde{y}(1,z)}{a_0 \tilde{y}(0,z)},
\end{equation}
where $\tilde{y}(\cdot, z)$ is a solution to the formal equation 
\begin{equation}\label{jacobieqn}
a_{n-1}y_{n-1}+a_n y_{n+1}+b_ny_n=zy_n, \quad n\ge 1,
\end{equation} 
such that it  is square-summable near $\infty$. The condition that $\mathcal{J}$ is bounded makes sure that $\tilde{y}$ is unique up to multiplicative constants, which implies that  (\ref{mfnforjacobi}) is well-defined.  
In this paper we assume that $a_0=-1$. This is practicable, since the Dirichlet boundary condition, i.e., $y_0=0$, is the considered one at 0, when the operator is specified. 

Weyl $m$-functions for Jacobi operators $m_{\mathcal{J}}$ are Herglotz functions, that is, they map the upper half plane $\mathbb{C}^+$  holomorphically to itself. With the Herglotz representation (which is similar to Poisson integral formula for positive harmonic functions), 
it is well-known that 
$m_{\mathcal{J}}$'s are expressed by  
\begin{equation}\label{HRofJ}
m_{\mathcal{J}}(z)=\int_{-\infty}^{\infty} \frac{d\rho_{\mathcal{J}}(t)}{t-z}.
\end{equation}
Here the spectral measures $d\rho_{\mathcal{J}}$ are probability measures whose supports are bounded, but not finite. 

It, moreover, turns out that the converse of the fact above is also true. All this means that Jacobi operators have an very easy inverse spectral theory  (Theorem 13.9 in \cite{RemLN}): there is one-to-one correspondence between (bounded) Jacobi operators and probability measures whose supports are infinite and bounded in $\R$.  
Put differently, the Cauchy transforms of such probability measures are the $m$-functions of Jacobi operators.

As opposed to this, there is no well-known result to tell us which Herglotz functions are the $m$-functions for discrete Schr\"odinger operators. This difficulty is the main reason why we cannot work on them directly. See \cite{RemLN} or \cite{Teschl} for all these properties above.

\subsection{Canonical systems}

\subsubsection{Canonical systems and de Branges theory} 
To see more larger picture between equations and Herglotz functions let us consider a (half-line) canonical system,  
\begin{equation}
\label{cs}
J{\bf{u}}'(x,z)=zH(x){\bf{u}}(x,z), \quad x\in(0,\infty),
\end{equation}
 where $J=\big( \begin{smallmatrix}  0 & -1 \\ 1 &0 \end{smallmatrix}$\big) and $H$ (called a Hamiltonian) is a positive semidefinite $2\times2$ matrix whose entries are real-valued, locally integrable functions. Here $z$ is a spectral parameter. In particular, a canonical system or a Hamiltonian $H$ is called \textit{trace-normed}, if $\textrm{Tr }H(x)=1$ for almost all $x$ in $(0,\infty)$. For (\ref{cs}) we always place a boundary condition at 0,
\begin{equation}
\label{bcat0forcs}
u^1(0,z)=0,
\end{equation}
where $u^1$ is the first component of ${\bf{u}}=\big( \begin{smallmatrix}  u^1 \\ u^2\end{smallmatrix}$\big). 

Similar to $m_{\mathcal{J}}$, a Weyl $m$-function $m_H$ corresponding to the given canonical system can be expressed by 
\begin{equation}
\label{mfnforcs}
m_H(z)=\frac{\tilde{u}^2(0,z)}{\tilde{u}^1(0,z)},
\end{equation}
where $\tilde{{\bf{u}}}=\big( \begin{smallmatrix} \tilde{u}^1 \\ \tilde{u}^2 \end {smallmatrix} \big) $ is a solution to (\ref{cs}) satisfying   
\begin{equation}
\label{H-int}
\int_0^{\infty} \tilde{{\bf{u}}}^*(x,z)H(x)\tilde{{\bf{u}}}(x,z)dx<\infty.
\end{equation}
Here $^*$ means the Hermitian adjoint. Such a solution satisfying (\ref{H-int}) is called \textit{$H$-integrable}. Note that a half-line trace-normed canonical system is always in a limit-point case at $\infty$, which was obtained from the original argument by \cite{deB} or an alternative proof in \cite{Ach}. In other words, there is only one $H$-integrable solution up to multiplicative constants, and therefore (\ref{mfnforcs}) is well-defined. See \cite{Hur3,Win,Win2} for more details.

De Branges \cite{deB} and Winkler \cite{Win} then showed that, for a given Herglotz function, there exists a unique half-line trace-normed canonical system with (\ref{bcat0forcs}), such that its $m$-function $m_H$ is the given Herglotz function. In \cite{Hur3} it was shown that it is much easier to see which canonical system is an eigenvalue equation of some Schr\"odinger operator, than which Herglotz function is an $m$-function of some Schr\"odinger operator. Based on this theme, this one-to-one correspondence will be essential later in order to cope with canonical systems.

\subsubsection{Singular intervals}\label{singularinterval}
For the purpose of the later use, we need  the following treatment, based on \cite{deB} or mainly Section 10 in \cite{RemdB}. An interval is called a \textit{singular interval}, if there exists a  number $\varphi$, so that on the given interval,  a Hamiltonian $H$ has the form
\begin{equation*}
H(x)=h(x)P_{\varphi}, \quad P_{\varphi}=\begin{pmatrix}\cos^2\varphi & \cos\varphi \sin\varphi\\ \cos\varphi\sin\varphi & \sin^2\varphi  \end{pmatrix}
\end{equation*}
for some locally integrable nonnegative function $h$. Note that $P_{\varphi}$ is the projection onto the vector $\left( \begin{smallmatrix} \cos\varphi \\ \sin\varphi \end{smallmatrix} \right)$ and it  is invariant under adding multiples of $\pi$ on $\varphi$.  

To understand the notion of singular intervals, consider (\ref{cs}) on a singular interval $(a,b)$. After multiplying from the left by $J^{-1}=-J$, the system becomes 
$$
{\bf{u}}'(x,z)=-zh(x)JP_{\varphi}{\bf{u}}(x,z).
$$ 
Since the matrices on the right-hand side commute with one another for different values of $x$, the solution is given by 
$$
{\bf{u}}(x,z)=\exp \left( -z\int_a^x h(t) dt J P_{\varphi} \right) {\bf{u}}(a,z).
$$
However, the fact that $P_{\varphi}JP_{\varphi}=0$ (which can be seen either from a direct computation or alternatively from the fact that this matrix is singular, anti-self-adjoint and has real entries) indicates that the series for the exponential terminates and 
\begin{equation}\label{contslnforsi}
{\bf{u}}(x,z)=\left( 1-z\int_a^x h(t)dt J P_{\varphi} \right){\bf{u}}(a,z). 
\end{equation}
In particular, putting 
$H_0=\int_a^b H(x)dx$, we obtain 
$$
J({\bf{u}}(b,z)-{\bf{u}}(a,z))=zH_0{\bf{u}}(a,z).
$$
Hence, on a singular interval, (\ref{cs}) is actually a discrete canonical system in disguise. This treatment of singular intervals will be employed to convert a discrete canonical system to a continuous one in Section \ref{classification}.

\subsubsection{Finite measures and singular intervals}\label{finitemeasures}
As the last ingredient on a canonical system, we  see that a special type of a singular interval is necessary, when its spectral measure is finite, based on \cite{Win4} or \cite{Win2}.  

\begin{Theorem}[Theorem 2.1 \cite{Win4} or Theorem 4.11 \cite{Win2}]
\label{singularintervalwithfm}
The $m$-function of a canonical system has the form 
\begin{equation}\label{mfcnwithfm}
m_H(z)=a+\int_{-\infty}^{\infty} \frac{d\rho(t)}{t-z}
\end{equation} 
with a finite measure $d\rho$ if and only if $0$ is the left end point of a singular interval with $\varphi\neq0$. If $H$ is trace-normed and $(0,l)$ is the maximal singular interval with $\varphi\neq 0$, then the relations $a=\cot\varphi$ and $\int_{-\infty}^{\infty} d\rho(t)=(l \sin^2\varphi)^{-1}$ hold. 
\end{Theorem}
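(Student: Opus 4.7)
The plan is to reduce the theorem to two asymptotic statements about $m_H(iy)$ as $y\to\infty$, and then compute those asymptotics from the singular-interval data at $0$. A direct reading of the Herglotz representation $m_H(z)=A+Bz+\int((t-z)^{-1}-t/(t^2+1))\,d\mu(t)$ with $\int d\mu/(t^2+1)<\infty$ shows that $m_H(z)=a+\int(t-z)^{-1}\,d\rho(t)$ with $d\rho$ finite is equivalent to $B=0$ together with $\int d\mu<\infty$, which in turn is equivalent to $a=\lim_{y\to\infty} m_H(iy)$ existing as a finite number and $\int d\rho=\lim_{y\to\infty} y\,\mathrm{Im}\,m_H(iy)$ existing as a finite number. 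The theorem therefore reduces to matching these two limits with $\cot\varphi$ and $(l\sin^{2}\varphi)^{-1}$.

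For the ``if'' direction I would apply (\ref{contslnforsi}) on $(0,l)$ with the $H$-integrable solution $\tilde{\mathbf{u}}$ normalized by $\tilde u^{1}(0,z)=1$, so that $\tilde u^{2}(0,z)=m_H(z)$. Writing $m_l$ for the $m$-function of the canonical system restricted to $(l,\infty)$, the relation $\tilde{\mathbf{u}}(l,z)=(I-zl\,JP_{\varphi})\tilde{\mathbf{u}}(0,z)$ produces a Möbius relation between $m_H$ and $m_l$ that rearranges algebraically to
\[
\cos\varphi+\sin\varphi\,m_H(z)=\frac{\beta(z)}{1-zl\sin\varphi\,\beta(z)},\qquad \beta(z):=\cos\varphi+\sin\varphi\,m_l(z).
\]
Setting $z=iy$ and letting $y\to\infty$, the maximality of $(0,l)$ together with the Herglotz property of $m_l$ keeps $\beta(iy)$ bounded away from $0$, so the denominator is dominated by $-iyl\sin\varphi\,\beta(iy)$ and the right-hand side is asymptotic to $i/(yl\sin\varphi)$. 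Reading off real and imaginary parts yields both $m_H(iy)\to\cot\varphi$ and $y^{2}|\cos\varphi+\sin\varphi\,m_H(iy)|^{2}\to(l\sin\varphi)^{-2}$.

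The second asymptotic plugs into the Lagrange identity $\mathrm{Im}\,m_H(iy)=y\int_{0}^{\infty}\tilde{\mathbf{u}}^{*}(x,iy)H(x)\tilde{\mathbf{u}}(x,iy)\,dx$ under the same normalization. Because $\cos\varphi\,\tilde u^{1}+\sin\varphi\,\tilde u^{2}$ is conserved on $(0,l)$ (the algebraic step behind (\ref{contslnforsi})), the integrand there equals $|\cos\varphi+\sin\varphi\,m_H(iy)|^{2}$, and trace-normedness gives $\int_{0}^{l}\tilde{\mathbf{u}}^{*}H\tilde{\mathbf{u}}\,dx = l|\cos\varphi+\sin\varphi\,m_H(iy)|^{2}$. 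Multiplying by $y^{2}$ and using the previous limit gives the contribution $(l\sin^{2}\varphi)^{-1}$; the tail $y^{2}\int_{l}^{\infty}\tilde{\mathbf{u}}^{*}H\tilde{\mathbf{u}}\,dx$ vanishes in the limit, as one verifies from the first-order asymptotics of $\tilde{\mathbf{u}}(l,iy)$ read off the transfer matrix. This produces both claimed identities.

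For the converse I would argue contrapositively. If $0$ is not the left endpoint of a singular interval with $\varphi\neq 0$, then near $0$ either $H$ has a $\varphi=0$ singular piece of length $l_{0}>0$, or $H$ is non-degenerate. In the $\varphi=0$ case (\ref{contslnforsi}) with $P_{0}$ gives $\tilde u^{1}(x,iy)\equiv 1$ on that piece, whence $\int_{0}^{l_{0}}\tilde{\mathbf{u}}^{*}H\tilde{\mathbf{u}}\,dx=l_{0}$ and $y\,\mathrm{Im}\,m_H(iy)\ge y^{2}l_{0}\to\infty$, contradicting finiteness of the measure. The non-degenerate case is the main technical obstacle: one must produce a positive lower bound on $\int_{0}^{\delta}\tilde{\mathbf{u}}^{*}H\tilde{\mathbf{u}}\,dx$ uniform in $y$. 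I would obtain it by a compactness argument on the unit sphere of values of $\tilde{\mathbf{u}}(0,iy)/|\tilde{\mathbf{u}}(0,iy)|$ (constrained by the Herglotz property of $m_H$), exploiting that $H$ has no fixed null direction throughout $(0,\delta)$. This again drives $y\,\mathrm{Im}\,m_H(iy)\to\infty$ and completes the contrapositive.
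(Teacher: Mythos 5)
First, a point of comparison: the paper does not prove Theorem \ref{singularintervalwithfm} at all --- it is imported verbatim from Winkler (Theorem 2.1 of \cite{Win4}, Theorem 4.11 of \cite{Win2}), so there is no internal proof to measure you against. Your overall strategy (reduce to the asymptotics $a=\lim_{y\to\infty}m_H(iy)$ and $\rho(\R)=\lim_{y\to\infty}y\,\mathrm{Im}\,m_H(iy)$, then compute them from the transfer matrix over the singular interval) is the natural one, and your M\"obius relation is correct: with $\gamma(z):=\cos\varphi+\sin\varphi\,m_H(z)$ one indeed gets $\gamma=\beta/(1-zl\sin\varphi\,\beta)$ from (\ref{contslnforsi}). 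But the argument has a real gap exactly where the hypothesis of maximality must do its work. You assert that ``maximality plus the Herglotz property keeps $\beta(iy)$ bounded away from $0$.'' This is not justified and is not even true in general: $\beta(iy)\to0$ occurs whenever $m_l(iy)\to-\cot\varphi$, which is possible with an infinite associated measure (such Herglotz functions exist and, by de Branges--Winkler, every one of them is the $m$-function of some Hamiltonian on $(l,\infty)$). What your asymptotic actually requires is the weaker statement $y|\beta(iy)|\to\infty$, and proving \emph{that} from maximality is precisely the ``only if'' direction of the theorem applied to the truncated system at $l$ (conjugated by a rotation through $\varphi$): $y\beta(iy)$ stays bounded exactly when $H$ continues past $l$ as the same projection $P_\varphi$. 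So your ``if'' direction silently rests on the converse direction --- and the converse is the part you leave as a sketch, with the genuinely hard ``non-degenerate'' case handled only by an appeal to an unspecified compactness argument.

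Two further problems. (a) Sign: from $\gamma(iy)\sim i/(yl\sin\varphi)\to0$ you can only conclude $m_H(iy)\to-\cot\varphi$, not $+\cot\varphi$. A direct check with the paper's conventions ($J$, boundary condition $u^1(0,z)=0$, $m_H=u^2/u^1$) on $H=P_{\pi/4}$ on $(0,1)$ and $H=P_{\pi/2}$ on $(1,\infty)$ gives $m_H(z)=z/(2-z)=-1+2(2-z)^{-1}$, i.e.\ $a=-1=-\cot(\pi/4)$ while $\rho(\R)=2=(l\sin^2\varphi)^{-1}$; so the constant is $-\cot\varphi$ here (the discrepancy with the quoted statement is a convention mismatch with Winkler and is immaterial for this paper, which only uses $\varphi=\pi/2$), but in any case your own computation does not deliver the sign you claim. (b) Your detour through the Lagrange identity needs the unproved tail estimate $y^2\int_l^\infty\tilde{{\bf{u}}}^*H\tilde{{\bf{u}}}\,dx\to0$ and is unnecessary: once $y\gamma(iy)\to i(l\sin\varphi)^{-1}$ is known, $y\,\mathrm{Im}\,m_H(iy)=(\sin\varphi)^{-1}y\,\mathrm{Im}\,\gamma(iy)\to(l\sin^2\varphi)^{-1}$ immediately. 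For the converse, no compactness over a sphere of directions is needed either: assuming (\ref{mfcnwithfm}), the normalized solution satisfies ${\bf{u}}(0,iy)\to v:=(1,a)^t$, the Lagrange identity gives $\int_0^\delta{\bf{u}}^*H{\bf{u}}\,dx\le\rho(\R)/y^2$, and the a priori bound $\|{\bf{u}}(x,iy)-{\bf{u}}(0,iy)\|\le\sqrt{\rho(\R)\,x}$ (Cauchy--Schwarz on the integrated equation, using $\mathrm{Tr}\,H=1$) lets you pass to the limit and conclude $Hv=0$ a.e.\ near $0$; since $H$ is trace-normed this forces $H=P_\psi$ with $\cos\psi+a\sin\psi=0$ there, which is the desired singular interval with $\psi\neq0$. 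I would encourage you to restructure the proof so that this converse argument comes first; it then supplies, applied at $l$, exactly the growth of $y|\beta(iy)|$ that your ``if'' direction is missing.
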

Here a \emph{maximal} singular interval means that there is no essentially larger singular interval containing the given one. In the theorem above, since $(0,l)$ is a maximal singular interval and $0$ is its left end point, any interval $(0,l+\epsilon)$ cannot be a singular interval for $\epsilon>0$. Remark that a singular interval with $\varphi=0$ is related to the $z$-term in Herglotz representation, not to a finite measure. 

By (\ref{HRofJ}), i.e., $a=0$ and $\rho(\R)=1$ in (\ref{mfcnwithfm}), Theorem \ref{singularintervalwithfm} tells us that the canonical systems for Jacobi operators have $(0,1)$ as the maximal singular interval with $\varphi=\pi/2$. For a later use, let us summarize this fact by the following corollary:
\begin{Corollary}\label{Hforjacobi}
Hamiltonians $H$ of the canonical systems corresponding to Jacobi operators are the projection 
$P_{\pi/2}=(\begin{smallmatrix} 0&0\\0&1\end{smallmatrix})$ exactly on
 $(0,1)$.
\end{Corollary}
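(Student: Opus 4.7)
The plan is to obtain the corollary as a direct specialization of Theorem \ref{singularintervalwithfm} to the Herglotz representation (\ref{HRofJ}) coming from a Jacobi operator. So the work is really a matter of reading off parameters and using that the Hamiltonian is trace-normed.

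First I would compare (\ref{HRofJ}) with (\ref{mfcnwithfm}): for a Jacobi operator the spectral measure $d\rho_{\mathcal J}$ is a probability measure with bounded, infinite support, and there is no additive constant, so $a=0$ and $\int_{-\infty}^{\infty}d\rho_{\mathcal J}(t)=1$. Since the total mass is finite, the ``only if'' direction of Theorem \ref{singularintervalwithfm} applies, and $0$ must be the left endpoint of a maximal singular interval $(0,l)$ with some angle $\varphi\neq 0$.

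Next I would read off $\varphi$ and $l$ from the identities supplied by the theorem. The relation $a=\cot\varphi=0$ pins the angle down to $\varphi=\pi/2$ (mod $\pi$, which is the intrinsic ambiguity of $P_\varphi$). With this value, the relation $(l\sin^2\varphi)^{-1}=\rho_{\mathcal J}(\R)=1$ becomes $l=1$. Hence the maximal singular interval with $\varphi\neq 0$ anchored at $0$ is precisely $(0,1)$ with angle $\pi/2$, and on it $H(x)=h(x)P_{\pi/2}$ for some nonnegative locally integrable $h$.

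Finally I would use the trace-normed condition: $\operatorname{Tr}H(x)=h(x)\operatorname{Tr}P_{\pi/2}=h(x)=1$ a.e.\ on $(0,1)$, so $H\equiv P_{\pi/2}$ there, and the word ``exactly'' is justified by maximality of $(0,1)$ as a singular interval of this form (any slight enlargement would contradict Theorem \ref{singularintervalwithfm}). I do not foresee a real obstacle; the only small subtlety is being careful that the $\pi$-ambiguity in $\varphi$ does not affect $P_\varphi$, so choosing the representative $\varphi=\pi/2$ is unambiguous at the level of the Hamiltonian.
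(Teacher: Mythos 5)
Your proposal is correct and follows essentially the same route as the paper: the paper derives the corollary exactly by matching (\ref{HRofJ}) with (\ref{mfcnwithfm}) to get $a=0$ and $\rho(\R)=1$, then reading off $\varphi=\pi/2$ and $l=1$ from Theorem \ref{singularintervalwithfm}. Your explicit use of the trace-normed condition to conclude $h\equiv 1$ is a detail the paper leaves implicit, but it is the intended argument.
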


\section{Classification of Jacobi and Discrete Schr\"odinger eigenvalue equations via canonical systems}
\label{classification}
Based on the same idea in \cite{Hur3}, in this section, we classify trace-normed canonical systems which can be written as the eigenvalue equations of Jacobi or discrete Schr\"odinger operators. For convenience, let us call them   \textit{Jacobi} or \textit{discrete Schr\"odinger canonical systems}, respectively. As talked in the introduction, this new representation is crucial to modify our problem about $m$-functions to the one about canonical systems.  

\subsection{Jacobi canonical systems}\label{Jacobicanonicalsystems}
To find the conditions for Jacobi canonical systems  let's first see that Jacobi eigenvalue equations are actually discrete canonical systems. 

Choose two solutions $c_n$ and $s_n$ to (\ref{jacobieqn}) with zero energy $z=0$, i.e., $a_{n-1}y_{n-1}+a_n y_{n+1}+b_ny_n=0$, such that 
\begin{equation}\label{icforcands}
c_0=s_1=1 \quad\textrm{ and }\quad c_1=s_0=0.
\end{equation} 
As mentioned, $a_0=-1$ in this paper, which will be consistent with (\ref{relationacs}) later. 
It turns out that, if any nonzero $a_0$ were chosen, it would be necessary to select the solution $c_n$ satisfying $a_0c_0=1$ and $c_1=0$, rather than $c_0=1$ and $c_1=0$ (which is in (\ref{icforcands})). 

 For $y_n$ satisfying (\ref{jacobieqn}), put 
\begin{equation}\label{relation}
\begin{pmatrix} y_n \\ y_{n+1} \end{pmatrix}=:
\begin{pmatrix} c_n & s_n \\  c_{n+1} & s_{n+1} \end{pmatrix}
\begin{pmatrix} u^1_n \\ u^2_n \end{pmatrix}
\quad
\left( =T_n {\bf{u}}_n \right)
\end{equation}
to define ${\bf{u}}_n:=(u^1_n,  u^2_n)^t$. Let's denote by $T_n$  the matrix on the right-hand side of (\ref{relation}). Note that $T_n$ has non-zero determinant. Then ${\bf{u}}_n$ satisfies 
\begin{equation}\label{discretecs}
J({\bf{u}}_{n+1}-{\bf{u}}_n)=z
\begin{pmatrix} c_{n+1}^2 & c_{n+1}s_{n+1}\\ c_{n+1}s_{n+1} &s_{n+1}^2 \end{pmatrix}
{\bf{u}}_n
\quad
(=H_{n+1} {\bf{u}}_n).
\end{equation}
(Here the matrix in (\ref{discretecs}) on the right-hand side is denoted by $H_{n+1}$.) 

In order to see \eqref{discretecs} and some underlying principles, we introduce so-called (one-step) transfer matrices $M_n(z)$ by
\begin{equation}\label{M_n(z)}
M_n(z)=
\begin{pmatrix} 0 & 1 \\ -a_{n-1}/a_n & (z-b_n)/a_n \end{pmatrix}.
\end{equation}
Then, as expected from the name of $M_n(z)$, one has $T_n=M_n(0)T_{n-1}$. Solving this for $M_n(0)$ and comparing it to \eqref{M_n(z)} gives two following expressions of $a_n$ and $b_{n+1}$ via the solutions $c_n$ and $s_n$: for all $n\ge 0$, 
\begin{equation}\label{relationacs}
a_n=-\frac1{c_n s_{n+1}-c_{n+1}s_n}
\quad
\left( or
\quad
a_n (c_n s_{n+1}-c_{n+1}s_n)=-1 
\right)
\end{equation}
and 
\begin{equation}\label{relationbcs}
b_{n+1}=a_{n} a_{n+1}(c_{n}s_{n+2}-c_{n+2}s_{n}).
\end{equation} 
Moreover, the equation $T_{n+1}{\bf{u}}_{n+1}=M_{n+1}(z)T_{n}{\bf{u}}_{n}$ holds, since 
\begin{equation*}
T_{n+1}{\bf{u}}_{n+1}=\begin{pmatrix} y_{n+1} \\ y_{n+2} \end{pmatrix}=M_{n+1}(z)\begin{pmatrix} y_n \\ y_{n+1} \end{pmatrix}=M_{n+1}(z)T_{n}{\bf{u}}_{n}.
\end{equation*}
Due to the fact that the quantity, $M_{n+1}(z)-M_{n+1}(0)$, has only one non-zero entry, we see that  
\begin{equation*}
{\bf{u}}_{n+1}-{\bf{u}}_{n}=\frac{z}{a_{n+1}} T^{-1}_{n+1} \begin{pmatrix} 0&0\\0&1 \end{pmatrix} T_n {\bf{u}}_{n},
\end{equation*}
which implies \eqref{discretecs}. 

It has been seen how to convert a Jacobi eigenvalue equation (\ref{jacobieqn}) to the discrete canonical system (\ref{discretecs}).  Remark that there are other ways to alter (\ref{jacobieqn}) to some (\ref{discretecs}). Our way is, however, the one such that their $m$-functions are the same, which will be demonstrated later.\\ 

We now transform (\ref{discretecs}) to a \emph{trace-normed} (continuous) canonical system by doing a change of variables and introducing singular intervals. First do the following change of variables: for $n\ge 1$, 
\begin{equation*}
c_n+is_n=:R_n e^{i\varphi_n},
\end{equation*}
that is, $c_n=R_n \cos \varphi_n$ and $s_n=R_n \sin\varphi_n$. Without a loss of generality, one may assume that $R_n\ge0$. It turns out that $R_n>0$ in our case. Then (\ref{discretecs}) reads 
\begin{equation*}
J({\bf{u}}_{n+1}-{\bf{u}}_n)=z R^2_{n+1} P_{\varphi_{n+1}}{\bf{u}}_n,
\end{equation*}
where 
\begin{equation*}
P_{\varphi_{n}}=\begin{pmatrix} \cos^2 \varphi_{n} & \cos \varphi_{n}\sin \varphi_{n}\\ \cos \varphi_{n}\sin \varphi_{n} & \sin^2 \varphi_{n} \end{pmatrix}.
\end{equation*}
Adding multiples of $\pi$ on $\varphi$ does not change the projection matrix $P_{\varphi}$. Therefore, adapting the idea of \cite{Win3} (Theorem 2.3 therein), let us uniquely normalize $\varphi$ as a non-decreasing, right-continuous step function, such that, for $n\ge 1$, 
\begin{equation}\label{increasingsteps}
\varphi_{n+1}-\varphi_n\in (0,\pi). 
\end{equation}
Make sure that, for the unique representation, steps except the first one should be closed-open intervals (such as $[1,2)$) which are strictly increasing.

To switch a discrete canonical system to a continuous one, the treatment of singular intervals, which was discussed in Section \ref{singularinterval}, is now applied. By expanding or contracting these singular intervals, the system above reveals a \emph{trace-normed} canonical system
\begin{equation}\label{tncswithdet0}
J\frac{d}{dt}{\bf{u}}(t,z)=z 
\begin{pmatrix} \cos^2 \varphi(t) & \cos \varphi(t)\sin \varphi(t)\\ \cos \varphi(t)\sin \varphi(t) & \sin^2 \varphi(t) \end{pmatrix}
{\bf{u}}(t,z).
\end{equation}
Here, by putting $L_0=0$ and 
\begin{equation}\label{defofL}
L_n:=\sum_{k=1}^n R_k^2= \sum_{k=1}^n (c^2_k+s^2_k)\quad \textrm{for } n\ge 1
\end{equation}  
which are the lengths of new singular intervals, the function $\varphi$ should be a non-decreasing and right-continuous step function  on $(0,\infty)$, so that 
 \begin{equation}\label{stepfunction}
\varphi(t) =
\begin{cases}
\varphi_{n+1} \qquad \textrm{on }  [L_n, L_{n+1}) \quad (n\ge1) \\
\varphi_1 (=\pi/2 ) \quad \textrm{on } (0,L_1) \quad (n=0).
\end{cases}
\end{equation}
Here  (\ref{icforcands})  implies that $L_1=1$ and $\varphi_1=\pi/2$, that is, $\varphi(t)=\pi/2$ on $(0,1)$, which is consistent with Corollary \ref{Hforjacobi}.
\\

So far it have been verified that any Jacobi eigenvalue equations can be transformed to some trace-normed canonical systems, such that their Hamiltonian $H$ are projections $P_{\varphi}$ with non-decreasing and right-continuous step functions $\varphi$ satisfying $\varphi(t)=\pi/2$ exactly on $(0,1)$. A typical example of $\varphi$ for some Jacobi canonical system is in the following figure: 
\vspace{0.5cm}

\begin{tikzpicture}[scale=0.65]\label{graphofvarphi}
\draw[->] (0,0) -- (10,0) node[anchor=north] { \large{$t$}};
\draw (0,0) node[anchor=north] {0}
          (2,0) node[anchor=north] {$L_1$(=1)}
          (4.5,0) node[anchor=north] {$L_2$}
          (5.7,0) node[anchor=north] {$L_3$}
          (8,0) node[anchor=north] {$L_4$}
          (0,1.3) node[anchor=east] {$\varphi_1 (=\frac{\pi}2)$}
          (0,2.3) node[anchor=east] {$\varphi_2$}
          (0,3.5) node[anchor=east] {$\varphi_3$}
          (0,4.3) node[anchor=east] {$\varphi_4$};
\draw[->] (0,0) -- (0,6) node[anchor=east] {\large{$\varphi_n$}}; 
\draw[dotted] (2,0) -- (2,6)
                        (4.5,0) -- (4.5,6)
                       (5.7,0) -- (5.7,6)
                       (8,0) -- (8,6);
\draw[very thick]  (0,1.3) -- (2,1.3)
          (2,2.3) -- (4.5,2.3)
          (4.5,3.5) -- (5.7,3.5)
          (5.7,4.3) -- (8,4.3)
          (8,5.3) -- (9.3,5.3);

\draw (-0.03,1.3) -- (0.04,1.3)
          (-0.03,2.3) -- (0.04,2.3)
          (-0.03,3.5) -- (0.04,3.5)
          (-0.03,4.3) -- (0.04,4.3);
\draw (4.2,-1.2) node[anchor=north] {\textbf{Figure A.} \textrm{$\varphi_n$ for a Jacobi canonical system}};
\end{tikzpicture}
\vspace{0.5cm}

Let's finally show that a Jacobi operator and its corresponding Jacobi canonical system share their $m$-functions. First compare their solutions to (\ref{jacobieqn}) and (\ref{tncswithdet0}). More specifically, for $\tilde{y}_n$ a square-summable solution to (\ref{jacobieqn}) near $\infty$, the corresponding solution ${\tilde{\bf{u}}}(t,z)$ via (\ref{relation}) is $P_{\varphi}$-integrable. 
Indeed, 
\begin{eqnarray*}
\sum_{n=1}^{\infty} \tilde{y}_n^* \tilde{y}_n
&=& \sum_{n=1}^{\infty} {\tilde{\bf{u}}}_n^* H_n {\tilde{\bf{u}}}_n\\
&=& \sum_{n=1}^{\infty} {\tilde{\bf{u}}}_n^* R^2_n P_{\varphi_n} {\tilde{\bf{u}}}_n\\
&=& \sum_{n=1}^{\infty} \int_{L_{n-1}}^{L_{n}} {\tilde{\bf{u}}}_n^*  P_{\varphi_n} {\tilde{\bf{u}}}_n \textrm{ } dt\\
&=& \sum_{n=1}^{\infty} \int_{L_{n-1}}^{L_{n}} {\tilde{\bf{u}}}^*(t,z) P_{\varphi_n} {\tilde{\bf{u}}}(t,z)\textrm{ } dt\\
&=& \int_0^{\infty} {\tilde{\bf{u}}}^*(t,z) P_{\varphi} {\tilde{\bf{u}}}(t,z) \textrm{ }dt.
\end{eqnarray*}
Here the first equality holds due to (\ref{relation}). And (\ref{contslnforsi}), combined with the fact that  $P_{\varphi_n}JP_{\varphi_n}=0$, indicates that, on the interval $[L_{n-1}, L_n)$, 
$
 {\tilde{\bf{u}}}(t)^*P_{\varphi_n}{\tilde{\bf{u}}}(t)={\tilde{\bf{u}}}^*(L_n)P_{\varphi_n}{\tilde{\bf{u}}}(L_n), 
$
 which says that the fourth equality holds. By (\ref{mfnforjacobi}), (\ref{mfnforcs}), (\ref{icforcands}) and (\ref{relation}),  we then have that 
\begin{eqnarray*}
m_{J}(z)
&=& -\frac{\tilde{y}_1}{a_0\tilde{y}_0} \quad \left( =\frac{\tilde{y}_1}{\tilde{y}_0} \quad \textrm{since } a_0=-1 \right)\\
&=& \frac{c_1 \tilde{u}^1(0,z)+s_1 \tilde{u}^2(0,z)}{c_0 \tilde{u}^1(0,z)+s_0 \tilde{u}^2(0,z)}\\
&=& \frac{\tilde{u}^2(0,z)}{\tilde{u}^1(0,z)}\\
&=& m_{P_{\varphi}} (z), 
\end{eqnarray*}
which indicates that two $m$-functions are the same.\\ 

Let us summarize what we have discussed so far.
\begin{Theorem}\label{classificationofjcs}
Any Jacobi eigenvalue equation can be written as a canonical system of the form 
\begin{equation*}
J\frac{d}{dt}{\bf{u}}(t,z)=z 
\begin{pmatrix} \cos^2 \varphi(t) & \cos \varphi(t)\sin \varphi(t)\\ \cos \varphi(t)\sin \varphi(t) & \sin^2 \varphi(t) \end{pmatrix}
{\bf{u}}(t,z)
\end{equation*}
where $\varphi$ is a non-decreasing and right-continuous step function on $(0,\infty)$  with $\varphi(t)=\pi/2$ exactly on $(0,1)$, such that the given Jacobi equation and the corresponding canonical system share their $m$-functions.\\
\end{Theorem}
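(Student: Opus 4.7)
The plan is to execute the construction already developed in Section \ref{Jacobicanonicalsystems} as a proof, checking along the way that nothing is lost. First I would fix the two distinguished zero-energy solutions $c_n, s_n$ of (\ref{jacobieqn}) with the initial data (\ref{icforcands}), and use them to define the conjugating matrix $T_n$ in (\ref{relation}). A direct computation with the one-step transfer matrix $M_n(z)$ in (\ref{M_n(z)}) — exploiting that $M_n(z) - M_n(0)$ has a single non-zero entry — yields the discrete canonical system $J({\bf u}_{n+1} - {\bf u}_n) = z H_{n+1} {\bf u}_n$ with the rank-one $H_{n+1} = R_{n+1}^2 P_{\varphi_{n+1}}$ once one writes $c_n + i s_n = R_n e^{i\varphi_n}$. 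The identities (\ref{relationacs})--(\ref{relationbcs}) are free by-products of $T_n = M_n(0) T_{n-1}$.

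Next I would verify that the phase can be normalized as required. The Wronskian identity (\ref{relationacs}) combined with $a_n<0$ gives $c_n s_{n+1} - c_{n+1} s_n = -1/a_n > 0$, i.e.\ $R_n R_{n+1} \sin(\varphi_{n+1} - \varphi_n) > 0$, so after adding suitable multiples of $\pi$ the sequence $\varphi_n$ becomes strictly increasing with successive jumps in $(0,\pi)$, as in (\ref{increasingsteps}); this lifting is unique once $\varphi_1$ is fixed. The initial conditions (\ref{icforcands}) force $R_1 = 1$ and $\varphi_1 = \pi/2$, which pins down both the normalization and the base interval. Using the singular-interval machinery of Section \ref{singularinterval}, I would then stretch each index $n\ge 1$ to the continuous interval $[L_{n-1}, L_n)$ of length $R_n^2$, with $L_n$ as in (\ref{defofL}), and set $H(t) = P_{\varphi_n}$ there. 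Because $P_\varphi J P_\varphi = 0$, formula (\ref{contslnforsi}) shows that the continuous evolution across each such interval reproduces exactly the discrete step, so the resulting trace-normed system (\ref{tncswithdet0}) is equivalent to the discrete one. By construction $L_1 = 1$ and $\varphi \equiv \pi/2$ on $(0,1)$, which matches Corollary \ref{Hforjacobi} and furnishes the claimed step-function shape of $\varphi$.

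The last task is the equality of $m$-functions. For $H$-integrability I would reproduce the telescoping identity already indicated in the excerpt: on $[L_{n-1}, L_n)$ the quantity $\tilde{\bf u}^*(t) P_{\varphi_n} \tilde{\bf u}(t)$ is constant by (\ref{contslnforsi}) and $P_\varphi J P_\varphi = 0$, so $\int_0^\infty \tilde{\bf u}^* H \tilde{\bf u}\,dt = \sum_n R_n^2 \tilde{\bf u}_n^* P_{\varphi_n} \tilde{\bf u}_n = \sum_n |\tilde y_n|^2$, converting $\ell^2$ solutions to $H$-integrable ones and vice versa. For the boundary ratio, evaluating (\ref{mfnforcs}) at $t = 0^+$ and inverting (\ref{relation}) with $(c_0, s_0, c_1, s_1) = (1,0,0,1)$ gives $\tilde y_0 = \tilde u^1(0)$ and $\tilde y_1 = \tilde u^2(0)$, so $m_{\mathcal J}(z) = \tilde y_1/\tilde y_0 = m_{P_\varphi}(z)$ because $a_0 = -1$. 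I expect the main obstacle to be neither the algebra nor the singular-interval conversion, but the bookkeeping needed to confirm simultaneously that the normalization of $\varphi_n$ is unique, non-decreasing, right-continuous, and compatible with the mandatory first singular interval $(0,1)$ at $\varphi = \pi/2$ — this is precisely what singles out Jacobi canonical systems among all trace-normed canonical systems whose Hamiltonians are step projections.
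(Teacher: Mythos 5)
Your proposal is correct and follows essentially the same route as the paper: the transfer-matrix conversion to the discrete canonical system, the polar change of variables $c_n+is_n=R_ne^{i\varphi_n}$, the singular-interval stretching to a trace-normed continuous system, and the telescoping/boundary-value argument for equality of $m$-functions. Your explicit observation that $a_n<0$ forces $R_nR_{n+1}\sin(\varphi_{n+1}-\varphi_n)=-1/a_n>0$, which justifies the unique normalization of $\varphi_n$ with jumps in $(0,\pi)$, is a welcome detail that the paper leaves implicit.
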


By the same argument but in reverse, it is not hard to show the converse of Theorem \ref{classificationofjcs}: any trace-normed canonical system having such $\varphi$ is a Jacobi eigenvalue equation in disguise. To speak specifically, let's assume that such a $\varphi$ is given, i.e., $\varphi$ satisfies (\ref{stepfunction}) so that $L_0=0$, $\{ L_n \}_{n\ge1}$  is a strictly increasing sequence of positive numbers with $L_1=1$, and $\{\varphi_{n}\}_{n\ge1}$ is a sequence of numbers such that $\varphi_1=\pi/2$ and $\varphi_{n+1}-\varphi_n\in(0,\pi)$. 

Put  
\begin{equation}\label{relationofRandL}
R_{n+1}:=\sqrt{L_{n+1}-L_n} \quad(>0)
\end{equation}
and 
$$c_{n}:=R_{n}\cos\varphi_{n} \quad\textrm{and}\quad s_{n}:=R_{n} \sin\varphi_{n}.$$
Then $\{ {\bf{u}}(n,z) \}$ satisfies (\ref{discretecs}). By (\ref{relation}), the given canonical system reads the Jacobi eigenvalue equation with $a$ and $b$, which are defined through (\ref{relationacs}) and (\ref{relationbcs}). With the same argument above, it can be shown that these two equations have the same $m$-functions.

\subsection{Discrete Schr\"odinger canonical systems}
Based on the previous discussion, let us classify the canonical systems which correspond to discrete Schr\"odinger eigenvalue equations, and then point out how difficult they are to obtain. 

For a discrete Schr\"odinger operators, $a_n\equiv -1$ for all $n\ge 0$. Therefore (\ref{relationacs}) becomes   
\begin{equation*}
c_n s_{n+1}-c_{n+1}s_n=1, 
\end{equation*} 
which, with $R_n$ and $\varphi_n$, expresses 
\begin{equation}\label{condforDSwithRandv}
R_nR_{n+1} \sin(\varphi_{n+1}-\varphi_{n})=1.
\end{equation}
Similar to the case of Jacobi operators, $R_1=1$ (or $L_1=1$) and $\varphi_1=\pi/2$.  Then (\ref{condforDSwithRandv}) implies that $R_2$ should satisfy the condition 
\begin{equation}\label{conditionforR2}
R_2=\csc (\varphi_2-\pi/2),
\end{equation}
which is presented by the blue-dashed curve in Figure B. In particular, $R_2\ge 1$ for any discrete Schr\"odinger operators. A typical second step (i.e., $\varphi_2$ and $R_2$) is the red thick segment in Figure B.\\

\vspace{0.2cm}
\begin{tikzpicture}[scale=0.65]\label{graphofvarphi}
\draw[->] (0,0) -- (13,0) node[anchor=north] { \large{$t$}};
\draw (0,0) node[anchor=north] {0}
          (2.5,0) node[anchor=north] {1}
          (5.1,0) node[anchor=north] {2}
          (7.8,0) node[anchor=north, red] {$L_2=R_2^2+1$}
          (0,1.2) node[anchor=east] {$\varphi_1=\frac{\pi}2$}
          (0,2.4) node[anchor=east] {$\pi$}
          (-1,3.2) node[anchor=east, red] {$\varphi_2$}
          (0,3.6) node[anchor=east] {$\frac{3\pi}2$};
\draw[->] (0,0) -- (0,5) node[anchor=east] {\large{$\varphi_n$}}; 
\draw[dotted]   (2.5,1.2) -- (12.5,1.2)
                         (2.5,2.4) -- (5,2.4)
                         (5.1,0) -- (5.1,2.4)
                       (2.5,3.6) -- (12.5,3.6);
\draw[dotted, red] 
                         (-1,3.2) -- (2.5,3.2)
                        (6.7,0)--(6.7,3.2);
\draw[dashed, blue] (5,2.4)  (5.5,3)  
                       (5.5,3) parabola bend (12.4,3.55) (12.4,3.55)
                      (5.5,1.8) parabola bend (12.4,1.25) (12.4, 1.25);
\draw[dashed, blue] (5.5,3) arc[start angle=110, end angle=250, radius=.62];
\draw[very thick, red] (2.5, 3.2) -- (6.7, 3.2);
\draw[thick]  (0,1.2) -- (2.5,1.2);
\draw (-0.03,1.2) -- (0.04,1.2)
          (-0.03,2.4) -- (0.04,2.4)
          (-0.03,3.5) -- (0.04,3.5)
          (-0.03,4.3) -- (0.04,4.3)
          (5.1, -0.03) -- (5.1, 0.03);
\draw (6.2,-1.2) node[anchor=north] {\textbf{Figure B.} \textrm{$\varphi_n$ for discrete Schr\"odinger canonical systems}};
\end{tikzpicture}\\

Different from Jacobi canonical systems, (\ref{condforDSwithRandv}) says that the length of the next step is determined by $\varphi$ when knowing the previous step. This is because there is no freedom to choose $a_n$ for discrete Schr\"odinger operators. This shows that it is very rare to obtain discrete Schr\"odinger canonical systems. 

With the similar process before, we present the classification of the discrete Schr\"odinger  canonical systems  by the following theorem: 
\begin{Theorem}\label{classificationofdscs}
Assume that $L_0=0$ and  $\{ L_n \}_{n\ge1}$ is a strictly increasing sequence of positive numbers with $L_1=1$. Put $R_1=1$ and, for $n\ge1$,  
$$R_{n+1}:=\sqrt{L_{n+1}-L_n}$$ 
(which is (\ref{relationofRandL})). Then any canonical system of the form 
\begin{equation*}
J\frac{d}{dt}{\bf{u}}(t,z)=z 
\begin{pmatrix} \cos^2 \varphi(t) & \cos \varphi(t)\sin \varphi(t)\\ \cos \varphi(t)\sin \varphi(t) & \sin^2 \varphi(t) \end{pmatrix}
{\bf{u}}(t,z)
\end{equation*}
with $\varphi$ satisfying  (\ref{stepfunction}), (\ref{increasingsteps}) and (\ref{condforDSwithRandv}) can be written as a discrete Schr\"odinger eigenvalue equation
$$
-y_{n+1}-y_{n-1}+b_ny_n=zy_n, 
$$
where  
\begin{equation}\label{formulaforp}
b_{n+1}=R_nR_{n+2} \sin(\varphi_{n+2}-\varphi_n).
\end{equation}
Moreover, their $m$-functions are the same. 
\end{Theorem}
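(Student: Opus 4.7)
The plan is to reverse the construction from Section \ref{Jacobicanonicalsystems}, exactly as was sketched there for the general Jacobi case, and then to observe that the extra hypothesis (\ref{condforDSwithRandv}) is precisely what forces all coefficients $a_n$ to equal $-1$. Given the sequence $\{L_n\}$ and the step function $\varphi$ supplied by the hypothesis, I would first set $R_{n+1}:=\sqrt{L_{n+1}-L_n}$ and then $c_n:=R_n\cos\varphi_n$, $s_n:=R_n\sin\varphi_n$ for $n\geq 1$, together with $c_0=1$ and $s_0=0$ dictated by (\ref{icforcands}). The conversion already performed in the excerpt then identifies the given canonical system with the discrete system (\ref{discretecs}), whose entries define Jacobi coefficients through (\ref{relationacs}) and (\ref{relationbcs}).

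The only computation specific to the discrete Schr\"odinger setting is the identity $a_n=-1$. Expanding by the angle-subtraction formula gives
\begin{equation*}
c_n s_{n+1}-c_{n+1}s_n = R_n R_{n+1}\sin(\varphi_{n+1}-\varphi_n),
\end{equation*}
which equals $1$ for every $n\geq 1$ by hypothesis (\ref{condforDSwithRandv}); for $n=0$ the same quantity equals $1$ directly from $c_0=1$, $s_0=0$ together with $R_1=1$ and $\varphi_1=\pi/2$. Hence $a_n=-1$ for all $n\geq 0$ by (\ref{relationacs}). Substituting $a_n=a_{n+1}=-1$ into (\ref{relationbcs}) and re-expanding $c_n s_{n+2}-c_{n+2}s_n$ by the same identity yields exactly (\ref{formulaforp}).

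The equality of $m$-functions is inherited verbatim from the Jacobi construction: the telescoping
\begin{equation*}
\sum_{n=1}^\infty \tilde y_n^{*}\tilde y_n \;=\; \int_0^{\infty} \tilde{\mathbf{u}}^{*}(t,z)\, P_\varphi\, \tilde{\mathbf{u}}(t,z)\,dt
\end{equation*}
and the boundary identification of $m_{\mathcal{J}}$ with $m_{P_\varphi}$ via (\ref{mfnforjacobi}), (\ref{mfnforcs}), (\ref{icforcands}) and (\ref{relation}) use only the passage between discrete and trace-normed continuous form, which is insensitive to the specific value of $a_n$. I do not anticipate a substantive obstacle here; the only bookkeeping point is that hypothesis (\ref{condforDSwithRandv}) is imposed for $n\geq 1$, so the equality $a_0=-1$ must be recovered separately from the boundary values (\ref{icforcands}), as noted above.
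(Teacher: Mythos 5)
Your proposal is correct and follows essentially the same route as the paper: the theorem is obtained by running the Jacobi-case conversion of Section \ref{Jacobicanonicalsystems} in reverse, with the hypothesis (\ref{condforDSwithRandv}) translating via $c_ns_{n+1}-c_{n+1}s_n=R_nR_{n+1}\sin(\varphi_{n+1}-\varphi_n)$ and (\ref{relationacs}) into $a_n\equiv-1$, and (\ref{relationbcs}) then reducing to (\ref{formulaforp}). Your separate check of $a_0=-1$ from (\ref{icforcands}) is a correct and worthwhile bookkeeping detail that the paper leaves implicit.
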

Remark that, compared to the continuous case, (\ref{condforDSwithRandv}) and (\ref{formulaforp}) correspond to (20) and (25) in \cite{Hur3}, respectively.

\section{Non-density of discrete Schr\"odinger $m$-functions}\label{example}
In this section the $m$-functions for discrete Schr\"odinger operators are shown not to be dense on those for Jacobi operators. As mentioned in the introduction, this result is opposite to the density of Schr\"odinger $m$-functions on all Herglotz functions in \cite{Hur3} for the continuous setting. 

To see this, the one-to-one correspondence between Herglotz functions and canonical systems is now utilized to deal with the problem on canonical systems. Based on both  Theorem \ref{classificationofjcs} and Theorem \ref{classificationofdscs} (which are the representations of Jacobi and discrete Schr\"odinger canonical systems respectively), it is enough to construct some Jacobi canonical system whose $m$-function cannot be approximated by those of discrete Schr\"odinger operators.

For this, a topology on canonical systems is required, such that it accords with the local uniform convergence on Herglotz functions. Besides de Branges' own work \cite{deB} for this (see also Lemma 2.1 in \cite{Win4}), we adapt the following convergence on canonical systems in \cite{Hur3}: 

\begin{Lemma}[Proposition 5.1 in \cite{Hur3}]\label{topologyoncs}
The convergence  $m_{H_n}(z)\to m_H(z)$, $n\to\infty$, holds locally uniformly for $z\in\C^+$ if and only if it holds 
\begin{equation}
\label{weak*conv}
\int_0^{\infty}{\boldsymbol{f}}^*H_n {\boldsymbol{f}}\to\int_0^{\infty}{\boldsymbol{f}}^*H {\boldsymbol{f}}
\end{equation}
for all continuous functions ${\boldsymbol{f}}=(f^1,f^2)^t$ with compact support of $[0,\infty)$.
\end{Lemma}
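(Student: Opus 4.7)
The plan is to prove the equivalence by exploiting the de Branges--Winkler one-to-one correspondence between trace-normed half-line canonical systems with boundary condition (\ref{bcat0forcs}) and Herglotz functions: both directions reduce to a continuity/uniqueness argument inside this bijection.

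For the implication that (\ref{weak*conv}) forces $m_{H_n}(z)\to m_H(z)$, I would first pass to the limit in the integrated form of the canonical system,
\begin{equation*}
{\bf{u}}(x,z)={\bf{u}}(0,z)-z\int_0^x JH_n(t){\bf{u}}(t,z)\,dt,
\end{equation*}
using a Volterra iteration: each Picard iterate involves only integration of a continuous function of $t$ against $H_n(t)$, so the weak* hypothesis transfers through, while a Gr\"onwall-type bound using $\textrm{Tr}\,H_n(x)=1$ gives uniform control of the resulting series in $x$ on compact subintervals of $[0,\infty)$ and in $z$ on compact subsets of $\C^+$. Combining this with the $H_n$-integrability condition (\ref{H-int}), the $H_n$-integrable Weyl solutions, normalized by $\tilde u^1(0,z)=1$, converge to the $H$-integrable Weyl solution for $H$, so by (\ref{mfnforcs}) we get $m_{H_n}(z)\to m_H(z)$ pointwise on $\C^+$. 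Vitali's theorem, applied to the locally bounded family of Herglotz functions $\{m_{H_n}\}$, promotes pointwise convergence to locally uniform convergence.

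For the reverse implication I would exploit compactness. The set of trace-normed Hamiltonians is sequentially compact in the weak* topology given by (\ref{weak*conv}): the normalization $\textrm{Tr}\,H_n(x)=1$ forces every matrix entry to be uniformly bounded by $1$, placing $\{H_n\}$ in a bounded subset of $L^{\infty}_{\mathrm{loc}}$ to which a standard diagonal Helly selection applies. Given any subsequence of $\{H_n\}$, extract a further subsequence converging weakly to some limit $\tilde H$. By the forward direction just established, the associated $m$-functions converge locally uniformly to $m_{\tilde H}$, while by hypothesis they converge to $m_H$; hence $m_{\tilde H}=m_H$, and the de Branges--Winkler bijection yields $\tilde H=H$. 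Since every subsequence has a further subsequence converging weakly to the same limit $H$, the whole sequence $H_n$ converges weakly to $H$, which is (\ref{weak*conv}).

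The main obstacle I expect lies in the forward direction: weak* convergence of $H_n$ only against compactly supported test functions does not directly control the $H_n$-integrable Weyl solutions near $x=\infty$, so some extra argument is needed to ensure that no portion of the quadratic form $\tilde{\bf{u}}^*H_n\tilde{\bf{u}}$ escapes to infinity along the sequence. The natural remedy is to exploit the limit-point property of trace-normed canonical systems at $\infty$, together with the nested Weyl-disk description of the $m$-function and uniformity in $z\in\C^+$, but the bookkeeping is delicate, which is precisely why the complete argument is developed in \cite{Hur3}.
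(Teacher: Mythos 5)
The paper itself gives no proof of this lemma: it is imported verbatim as Proposition~5.1 of \cite{Hur3} (with a pointer to de Branges \cite{deB} and Lemma~2.1 of \cite{Win4}), so there is no in-paper argument to compare against. Judged on its own, your outline follows the standard route --- and, as far as one can tell, essentially the route of the cited reference: transfer-matrix/solution convergence plus Weyl disks for one direction, Helly compactness of trace-normed Hamiltonians plus the de Branges--Winkler uniqueness for the other. The backward direction as you present it is complete modulo the forward one (the only points to check are that a weak-$\ast$ limit of positive semidefinite trace-normed Hamiltonians is again of that class, and that de Branges uniqueness identifies $\tilde H$ with $H$ a.e.; both are routine).

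The genuine gap is in the forward direction, and it is the one you yourself flag. Two things are missing. First, the Picard-iteration step is not quite right as stated: the hypothesis (\ref{weak*conv}) concerns fixed continuous test functions, whereas the $k$-th iterate integrates $H_n$ against the $(k-1)$-th iterate, which depends on $n$. The correct intermediate step is to upgrade (\ref{weak*conv}), together with the uniform entrywise bound coming from $\mathrm{Tr}\,H_n=1$, to locally uniform convergence of $x\mapsto\int_0^x H_n(t)\,dt$, from which convergence of the iterated integrals (hence of the transfer matrices, locally uniformly in $x$ and $z$) follows. Second, and more seriously, locally uniform convergence of the transfer matrices does not by itself give convergence of the normalized Weyl solutions --- asserting that it does is essentially circular, since that convergence is equivalent to $m_{H_n}\to m_H$. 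The passage from local data to the $m$-function must go through the nested Weyl disks: $m_{H_n}(z)$ lies in the disk $D_n(x,z)$, these disks converge to $D(x,z)$ for fixed $x$ by the transfer-matrix convergence, and the limit-point property shrinks $D(x,z)$ to the point $m_H(z)$ as $x\to\infty$. You name exactly this remedy but defer it to \cite{Hur3}; without carrying it out the proof is incomplete, since this is the step that does the real work.
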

Here, let us say that \emph{$H_n$ converges to $H$ weak-$\ast$}, when (\ref{weak*conv}) holds. Note that the trace-normed condition, $\textrm{Tr }H(t)=1$, is preserved in the limiting process, which indicates that $H(t)dt$ is absolutely continuous with respect to the Lebesgue measure. Since there is no point spectrum for these measures,  any vectors containing characteristic functions, such as $(\chi_{[1,2)}(t),0)^t$, can be test functions in Lemma \ref{topologyoncs}. In other words, the weak-$\ast$ convergence may be operated with any characteristic functions. \\ 

We are ready to see the non-density of the $m$-functions for discrete Schr\"odinger operators on all those for Jacobi operators. 
\begin{Theorem}\label{nodensity}
There is a Jacobi operator whose $m$-function cannot be approximated by the $m$-functions for discrete Schr\"odinger operators in the sense of the local uniform convergence. 
\end{Theorem}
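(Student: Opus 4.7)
My plan is to exploit the rigidity condition $R_nR_{n+1}\sin(\varphi_{n+1}-\varphi_n)=1$ from (\ref{condforDSwithRandv}). For any discrete Schr\"odinger canonical system one has $R_1=1$, hence $L_1=1$, and $R_2=\csc(\varphi_2-\pi/2)\ge 1$, which forces $L_2=1+R_2^2\ge 2$. Thus the Hamiltonian of \emph{every} discrete Schr\"odinger canonical system is a single constant projection $P_{\varphi_2}$ on the whole interval $[1,2)$.

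I would then build a Jacobi canonical system that blatantly violates this ``one projection on $[1,2)$'' property. By Theorem \ref{classificationofjcs}, pick $L_1=1$, $L_2=3/2$, $L_3=3$ together with $\varphi_1=\pi/2$, $\varphi_2=3\pi/4$, $\varphi_3=5\pi/4$; the step increments $\pi/4$ and $\pi/2$ both lie in $(0,\pi)$, so this defines a legitimate Jacobi canonical system $H$. On $[1,3/2)$ we have $H=P_{3\pi/4}$, while on $[3/2,2)$ we have $H=P_{5\pi/4}$, and these two projections are distinct because $3\pi/4\not\equiv 5\pi/4\pmod{\pi}$.

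Suppose toward contradiction that there exist discrete Schr\"odinger canonical systems $H_n$ with $m_{H_n}\to m_H$ locally uniformly on $\C^+$. By Lemma \ref{topologyoncs} together with the remark after it allowing characteristic-function test vectors, testing with $\boldsymbol{f}=\chi_{[a,b)}(\cos\theta,\sin\theta)^t$ and using the identity $(\cos\theta,\sin\theta)P_\varphi(\cos\theta,\sin\theta)^t=\cos^2(\theta-\varphi)$ yields
\begin{equation*}
\int_a^b \cos^2(\theta-\psi_n(t))\,dt \;\longrightarrow\; \int_a^b \cos^2(\theta-\varphi(t))\,dt
\end{equation*}
for every interval $[a,b)$ and every $\theta\in\R$, where $\psi_n$ denotes the angle function of $H_n$. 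By the rigidity above, $\psi_n$ is a constant $\alpha_n$ on $[1,2)$. Applied with $[a,b)=[1,3/2)$ this gives $\tfrac12\cos^2(\theta-\alpha_n)\to\tfrac12\cos^2(\theta-3\pi/4)$, while applied with $[a,b)=[3/2,2)$ it gives $\tfrac12\cos^2(\theta-\alpha_n)\to\tfrac12\cos^2(\theta-5\pi/4)$. Hence $\cos^2(\theta-3\pi/4)=\cos^2(\theta-5\pi/4)$ for every $\theta$, which forces $3\pi/4\equiv 5\pi/4\pmod{\pi}$, a contradiction.

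The only real subtlety is the admissibility of characteristic-function test vectors in Lemma \ref{topologyoncs}, which the paper has already noted and which relies on the trace-normed Hamiltonian being absolutely continuous with respect to Lebesgue measure. Once that is granted, the argument is a direct length-and-angle comparison between the single constant projection every discrete Schr\"odinger system must carry on $[1,2)$ and the two distinct projections that the constructed Jacobi system carries on $[1,3/2)$ and $[3/2,2)$.
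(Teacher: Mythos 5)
Your proposal is correct and follows essentially the same route as the paper: the rigidity $R_2\ge 1$ forces every discrete Schr\"odinger Hamiltonian to be a single constant projection on $[1,2)$, while your Jacobi system carries two distinct projections there, and characteristic-function test vectors in Lemma \ref{topologyoncs} separate the two. The only cosmetic differences are your choice $\varphi_3=5\pi/4$ (which obliges you to test in a general direction $(\cos\theta,\sin\theta)^t$, since $P_{3\pi/4}$ and $P_{5\pi/4}$ share the same $(1,1)$ entry, whereas the paper takes $\varphi_3=\pi$ and tests with $(1,0)^t$) and your use of uniqueness of limits in place of the paper's explicit lower bound on the approximation error.
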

\begin{proof}
It is enough to give some Jacobi operator whose $m$-function cannot be approachable by the $m$-functions of discrete Schr\"odinger operators. By de Branges theory this is equivalent to find a Jacobi canonical system which cannot be close by the discrete Schr\"odinger canonical systems in the sense of the topology induced by Lemma \ref{topologyoncs}, as talked.

Based on this idea, consider any Jacobi canonical system satisfying
\begin{equation*}
\varphi^\mathcal{J} (t)=\begin{cases} 3\pi/4  \quad (=\varphi^\mathcal{J}_2) \quad \textrm{on } [1,3/2) \\ \pi \quad (=\varphi^\mathcal{J}_3) \quad \textrm{on } [3/2,2) \end{cases}.
\end{equation*}
(Here the superscript $\mathcal{J}$ is for Jacobi canonical systems.) In other words, its second and third steps look like ones on the following figure: 

\begin{tikzpicture}[scale=0.65]\label{graphofvarphi}
\draw[->] (0,0) -- (13,0) node[anchor=north] { \large{$t$}};
\draw (0,0) node[anchor=north] {0}
          (2.5,0) node[anchor=north] {1}
          (5.1,0) node[anchor=north] {2}
           (0,1.2) node[anchor=east] {$\varphi^\mathcal{J}_1=\frac{\pi}2$}
          (0,2.4) node[anchor=east,red] {$\varphi^\mathcal{J}_3=\pi$}
          (-2.1,1.85) node[anchor=east, red] {$\varphi^\mathcal{J}_2$}
          (0,3.6) node[anchor=east] {$\frac{3\pi}2$};
\draw[->] (0,0) -- (0,5) node[anchor=east] {\large{$\varphi^\mathcal{J}_n$}}; 
\draw[dotted]   (2.5,1.2) -- (12.5,1.2)
                         (5.1,0) -- (5.1,2.4)
                       (2.5,3.6) -- (12.5,3.6);
\draw[dotted, red]                           (0,2.4) -- (5,2.4)
                         (-2.1,1.85) -- (2.5,1.85);
\draw[dashed, blue] (5,2.4)  (5.5,3)  
                       (5.5,3) parabola bend (12.4,3.55) (12.4,3.55)
                      (5.5,1.8) parabola bend (12.4,1.25) (12.4, 1.25);
\draw[dashed, blue] (5.5,3) arc[start angle=110, end angle=250, radius=.62];
\draw[very thick, red] (3.9, 2.4) -- (5.1, 2.4)
                    (2.5, 1.85) -- (3.9, 1.85);
\draw[thick]  (0,1.2) -- (2.5,1.2);
\draw (-0.03,1.2) -- (0.04,1.2)
          (-0.03,2.4) -- (0.04,2.4)
          (-0.03,3.5) -- (0.04,3.5)
          (-0.03,4.3) -- (0.04,4.3)
          (5.1, -0.03) -- (5.1, 0.03);

\draw (6.2,-1.2) node[anchor=north] {\textbf{Figure C.} \textrm{Some $\varphi^\mathcal{J}_n$ far from discrete Schr\"odinger canonical systems}};
\end{tikzpicture}\\

We now see that  discrete Schr\"odinger canonical systems cannot approach the Jacobi canonical system. Recall first that, due to (\ref{conditionforR2}), any $\varphi^{DS}$ for discrete Schr\"odinger canonical systems should be constant at least on $[1,2)$. 

Let us focus on the interval $[1,2)$. Since any constant function cannot be close to two different steps at the same time in the $L_1$-sense, Hamiltonians of discrete  Schr\"odinger canonical systems cannot converge in weak-$\ast$ to the one for the given Jacobi canonical system. More precisely, put $\varphi^{DS}_2=3\pi/4$ which is for the second step of discrete Schr\"odinger canonical system. Take a look at  the following figure to see the situation. 

\vspace{0.6cm}
\begin{tikzpicture}[scale=0.65]\label{graphofvarphi}
\draw           (2.5,0) node[anchor=north] {1}
          (3.9,0) node[anchor=north] {3/2}
          (5.1,0) node[anchor=north] {2}
          (0.5,2.2) node[anchor=east] {$\varphi^\mathcal{J}_3=\pi$}
          (0,1.4) node[anchor=east] {$\varphi^J_2=\frac{3\pi}4$}
          (0.5,3.6) node[anchor=east] {$\frac{3\pi}2$};
\draw (7.0,2.0) node[anchor=west, red] {$\varphi^{DS}_2$};
\draw[dotted]   (3.9, 2.2)--(3.9,0) 
                         (2.5,3.6) -- (5.1,3.6);
\draw[dotted]   (0,1.4) -- (2.5,1.4)
                         (0.5,2.2) -- (3.9,2.2);
\draw[very thick] (3.9, 2.2) -- (5.1, 2.2)
                    (2.5, 1.4) -- (3.9, 1.4);
\draw[dotted]  (2.5,.6)--(5.1,.6) ;
\draw           (5.1, -0.03) -- (5.1, 0.03);
\draw[very thick, red] (2.5, 1.8) -- (5.1,1.8);
\draw[dotted, red] (5.1,1.8) -- (7.0,1.8);
\draw (2.0,-1.2) node[anchor=north] {\textbf{Figure D.} \textrm{Failing $L_1$- approximation by the second step}};
\end{tikzpicture}\\

By direct computation, see that, for two vectors ${\boldsymbol{f}}=(\chi_{[1,3/2)},0)^t$ and ${\boldsymbol{g}}=(\chi_{[3/2,2)},0)^t$ which are used as test functions in Lemma \ref{topologyoncs},  
\begin{equation*}
\max \left\{ \left| \int_{\R} {\boldsymbol{f}}^*  \left(P_{\varphi^J}-P_{\varphi^{DS}} \right)  {\boldsymbol{f}} \right|,  
\left| \int_{\R} {\boldsymbol{g}}^*  \left(P_{\varphi^J}-P_{\varphi^{DS}} \right)  {\boldsymbol{g}} \right|\right\}=1/2.
\end{equation*}
This is because 
$
(1,0) (\begin{smallmatrix} h_{1} & h_2 \\ h_2 & h_3 \end{smallmatrix}) \left( \begin{smallmatrix} 1 \\0 \end{smallmatrix}\right) = h_1. 
$
It is easy to show that this is the minimum error (see Figure D), that is, for any $\varphi^{DS}$ for discrete Schr\"odinger canonical system, we have that 
\begin{equation*}
\max \left\{ \left| \int_{\R} {\boldsymbol{f}}^*  \left(P_{\varphi^J}-P_{\varphi^{DS}} \right)  {\boldsymbol{f}} \right|,  
\left| \int_{\R} {\boldsymbol{g}}^*  \left(P_{\varphi^J}-P_{\varphi^{DS}} \right)  {\boldsymbol{g}} \right|\right\}\ge1/2. 
\end{equation*}
All this  implies that the given Jacobi canonical system cannot be approximated by discrete Schr\"odinger canonical systems, as desired. 
\end{proof}

 As a next project, we may think of how large or small the set of either continuous or discrete Schr\"odinger operators is. This will reveal the relations between Schr\"odinger operators and the larger space, the set of either  canonical systems or Jacobi operators, which tells us more about their inverse spectral theories.\\

\textit{Acknowledgement.} The author is grateful to the referee for valuable discussion. This research was supported by Basic Science Research Program through the National Research Foundation of Korea (NRF) funded by the Ministry of Education (NRF-2016R1D1A1B03931764).\\ 


\begin{thebibliography}{10000}
\bibitem{Ach} K.R. Acharya, \textit{An alternate proof of the de Branges theorem on canonical systems}, 
ISRN Math. Anal. (2014), Art. ID 704607, 7 pp.

\bibitem{deB} L. de Branges, \textit{Hilbert Spaces of Entire Functions}, Prentice-Hall, Englewood Cliffs, 1968.

\bibitem{GL} I.M. Gelfand and B.M. Levitan, \textit{On the determination of a differential equation from its spectral function}, 
Amer. Math. Soc. Transl. (2) \textbf{1} (1955), 253--304.

\bibitem{GS2} F. Gesztesy and B. Simon, \textit{A new approach to inverse spectral theory, II. General real potentials and the connection to the spectral measure}, Ann. Math. \textbf{152} (2000), 593--643.

\bibitem{HSW} S. Hassi, H.S.V. de Snoo and H. Winkler, \textit{Boundary-value problems for two-dimensional canonical systems}, Int. Eq. Op. Theory \textbf{30} (2000), no. 4, 445--479.

\bibitem{Hur3}I. Hur, \textit{Density of Schr\"odinger Weyl-Titchmarsh $m$-functions on Herglotz functions}, J. Differential Equations 260 (2016), no. 11, 8137--8159.

\bibitem{KL} M.G. Krein and H. Langer, \textit{Continuation of Hermitian positive definite functions and related questions}, Int. Eq. Op. Theory \textbf{78} (2014), no. 1, 1--69.

\bibitem{Mar2} V. Marchenko, \textit{Sturm-Liouville Operators and Applications}, Birkh\"auser, Basel, 1986.

\bibitem{RemLN} C. Remling, Lecture note Chapter 13 for functional analysis, which can be downloaded from  the homepage www2.math.ou.edu/$\sim$cremling.

\bibitem{RemdB} C. Remling, \textit{Schr\"odinger operators and de~Branges spaces},
J.\ Funct.\ Anal.\ \textbf{196} (2002), 323--394.

\bibitem{SimIST} B. Simon, \textit{A new approach to inverse spectral theory, I. Fundamental formalism}, Ann. Math. \textbf{150} (1999), no. 3, 1029--1057.

\bibitem{Teschl} G. Teschl, \textit{Jacobi Operators and Completely Integrable Nonlinear Lattices}, Mathematical
Monographs and Surveys, 72, American Mathematical Society, Providence, 2000.

\bibitem{Win3} H. Winkler, \textit{On semibounded canonical systems}, Linear Algebra Appl. \textbf{429} (2008), no. 5-6, 1082--1092.

\bibitem{Win4} H. Winkler, \textit{Spectral estimations for canonical systems}, Math. Nachr. \textbf{220} (2000), 115--141.

\bibitem{Win} H. Winkler, \textit{The inverse spectral problem for canonical systems}, Int. Eq. Op. Theory \textbf{22} (1995), no. 3, 360--374.

\bibitem{Win2} H. Winkler, \textit{Two-dimensional Hamiltonian systems}, in Operator Theory (ed. Daniel Alpay), Springer, 2015. 

\end{thebibliography}

\end{document}